\newtheorem{theorem}{Theorem}
\newtheorem{lemma}{Lemma}
\newtheorem{claim}{Claim}
\theoremstyle{definition}
\title{A note on  Matching-Cut in $P_t$-free Graphs}
\author{Carl Feghali\thanks{Univ Lyon, EnsL, CNRS, LIP, F-69342, Lyon Cedex 07, France, email: \texttt{carl.feghali@ens-lyon.fr} }}
\date{}
\begin{document}
\maketitle

\begin{abstract}
A matching-cut of a graph is an edge cut that is a  matching. The problem \textsc{matching-cut} is that of recognizing graphs with a matching-cut and is NP-complete, even if the graph belongs to one of  a number of classes. We initiate the study of \textsc{matching-cut} for graphs without a fixed path as an induced subgraph. We show that \textsc{matching-cut} is in P for $P_5$-free graphs, but that there exists an integer $t > 0$ for which it is NP-complete for $P_{t}$-free graphs.  
 \end{abstract}
 
For a connected graph $G$ and a subset $E' \subset E(G)$, we say that $E'$ is a \emph{cutset} if $G - E'$ (i.e., the graph obtained by removing the edges in $E'$ but not their endpoints from $G$)  is disconnected. 

 In 1969, R. L. Graham \cite{grandstrand:2004} defined a cutset of edges to be a \emph{matching-cut} if no two edges
in the cutset have a vertex in common, and studied those graphs which  have no matching-cut, but whose every proper subgraph has a matching-cut. It was Chv\'atal \cite{chvatal} who initiated the study of \textsc{matching-cut}, the complexity problem of recognizing graphs admitting a matching-cut, showing that it is NP-complete, even for graphs with maximum degree at most four, yet in P for graphs with maximum degree at most three (unaware of Chv\'atal's result, Dunbar et al. \cite{dunbar1995nearly} formulated \textsc{matching-cut}, leaving its complexity as an open problem that was repopularized in 2016 in \cite{hedetniemi2016my}). The NP-hardness of \textsc{matching-cut} has since been shown to also hold for graphs with additional or other structural assumptions; see, for example, \cite{bonsma2009complexity, chen2021matching, le2019complexity, randerath2003stable}. To keep this paper short, we refer the reader to \cite{chen2021matching, le2019complexity} and references therein for a thorough discussion, including real-world applications.

For a positive integer $t$, we denote by $P_t$ the induced path with $t$ vertices. A graph $G$ is said to be $P_t$-free if it contains no $P_t$ as an induced subgraph. In this paper, we initiate the study of \textsc{matching-cut} for graphs without a fixed path as an induced subgraph. In particular, the following theorems are proved. 

\begin{theorem}\label{thm:p5}
\textsc{matching-cut} is polynomial-time solvable in $P_{5}$-free graphs.
\end{theorem}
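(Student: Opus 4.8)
The plan is to treat \textsc{matching-cut} as a two-colouring problem: a connected graph $G$ admits a matching-cut if and only if $V(G)$ can be partitioned into two nonempty parts $A$ and $B$ so that every vertex has at most one neighbour in the opposite part (the cross edges then form exactly the cut matching, and deleting them separates $A$ from $B$). Throughout I would use two elementary facts. First, a \emph{forcing rule}: a vertex with two neighbours already known to lie in $A$ must itself lie in $A$, and symmetrically for $B$; iterating this from a partially coloured instance either derives a contradiction or extends the colouring, in polynomial time. Second, a \emph{clique rule}: if $K$ is a clique with $|K|\ge 3$, then $K$ lies entirely in $A$ or entirely in $B$, since a vertex of $K$ on one side adjacent to two vertices of $K$ on the other would have two cross neighbours. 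Since the problem is trivial for disconnected graphs, I may assume $G$ is connected.

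Next I would exploit structure. By the theorem of Bacs\'o and Tuza, every connected $P_5$-free graph has a dominating set $D$ inducing either a clique or an induced $P_3$, and such a $D$ can be found in polynomial time. Consider first the case $|D|\ge 3$ with $D$ a clique. By the clique rule $D$ is monochromatic, say $D\subseteq A$. Every vertex of $B$ is dominated by $D$ yet can have at most one neighbour in $A$; hence each $b\in B$ has exactly one neighbour in $D$ and none in $A\setminus D$, so $B$ sends no edge to a red vertex outside $D$. Consequently $B$ is a union of connected components of $G-D$, and the edges between $B$ and $D$ form a matching (each $d\in D$, having two red neighbours inside $D$, can absorb at most one blue neighbour). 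It follows that a matching-cut with $D\subseteq A$ exists if and only if some component $C$ of $G-D$ is \emph{good}, meaning every vertex of $C$ has exactly one neighbour in $D$ and no two vertices of $C$ share that neighbour; one then takes $B=C$. Testing all components for goodness is polynomial, which settles this case (the case $D\subseteq B$ being symmetric).

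It remains to handle $|D|\le 3$, i.e.\ $D$ a clique on at most two vertices or an induced $P_3$. Here I would branch over the $O(1)$ colourings of $D$, discard those not using both colours globally, and for each run the forcing rule to closure. If every vertex becomes coloured, I simply verify the matching condition. Otherwise I would analyse the still-uncoloured vertices: each is dominated by $D$ and hence pinned to a bounded local configuration, and I expect the residual choice to reduce, as in the clique case, to selecting components of the uncoloured subgraph subject to matching constraints along their boundary, which is again polynomially checkable.

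The main obstacle is exactly this last step. In the large-clique case the minority side is \emph{forced} to attach to $D$ by a matching, which collapses the problem to an independent per-component test; but when $D$ is small its vertices may emit cross edges in both directions, so an uncoloured vertex can legitimately spend its single cross edge on a red or on a blue neighbour. Showing that the resulting forcing-closed, partially two-coloured instance still decomposes into independent, polynomially solvable pieces, rather than encoding a hard constraint-satisfaction instance, is the crux, and is where I would expect to need the full strength of $P_5$-freeness to control how the uncoloured region interacts with $D$.
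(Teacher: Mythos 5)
Your reformulation as a two-colouring problem and your treatment of the large-clique case are correct and essentially identical to the paper's: the dominating clique $D$ is forced monochromatic, every component of $G-D$ is forced monochromatic (a blue vertex with a red neighbour inside its component would have two red neighbours, one there and one in $D$), and it suffices to test each component for being colourable blue against an all-red remainder. The genuine gap is exactly where you flag it: the case $|D|\le 3$ ($K_1$, $K_2$ or an induced $P_3$). Branching on the colouring of $D$ and running your forcing rule to closure will in general colour almost nothing beyond $D$, and your hope that the residual partially coloured instance ``decomposes into independent, polynomially solvable pieces'' is left entirely unsubstantiated; as written, the argument does not go through.

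The missing idea is much simpler than any decomposition or propagation argument: one counts the good colourings directly. Fix one of the at most $2^3$ colourings $c$ of $D$. In any good colouring of $G$ extending $c$, each $x\in D$ has at most one neighbour coloured opposite to $c(x)$, so the restriction of the colouring to $N_{G-D}(x)$ is determined by choosing which single neighbour of $x$ (if any) deviates from $c(x)$ --- at most $|N_{G-D}(x)|+1$ possibilities. Since $D$ dominates $G$ and $|D|\le 3$, we have $V(G)=D\cup\bigcup_{x\in D}N_{G-D}(x)$, so there are at most $O(n^{3})$ candidate colourings in total; one enumerates them all and checks each for being good and for using both colours. This is the paper's Lemma~\ref{lem:p5good}, and it uses nothing about $P_5$-freeness beyond the Bacs\'o--Tuza guarantee that $|D|\le 3$ when $D$ is not a large clique. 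Your proposal is missing precisely this enumeration step, which is the entire content of the small-dominating-set case.
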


\begin{theorem}\label{thm:p12} There exists an integer $t > 0$ such that \textsc{matching-cut} is NP-complete in $P_{t}$-free graphs.   
\end{theorem}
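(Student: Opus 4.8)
The plan is to establish NP-completeness by the two usual ingredients. Membership in NP is immediate: a claimed matching-cut $(A,B)$ can be verified in polynomial time by checking that both parts are nonempty and that every vertex has at most one neighbour on the opposite side. For NP-hardness I would give a polynomial reduction from a partition-flavoured satisfiability problem, e.g. monotone not-all-equal $3$-SAT (equivalently, $2$-colourability of a $3$-uniform hypergraph), whose yes-instances are those admitting a $2$-colouring $(A,B)$ of the variables with no monochromatic clause. This condition reads naturally as a bipartition and therefore meshes with the bipartition induced by a matching-cut.

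The structural heart of the reduction is to force the constructed graph $G_\varphi$ to have bounded longest induced path. I would build $G_\varphi$ so that its vertex set partitions into a \emph{constant} number $c$ of cliques together with a single independent set $I$, with no edges inside $I$. Such a graph is automatically $P_{4c+2}$-free: in any induced path each clique contributes at most $2$ vertices (three pairwise-adjacent vertices cannot sit on an induced path), so at most $2c$ vertices of the path lie in the cliques; since $I$ is independent, the path has no two consecutive vertices in $I$, hence it contains at most $2c+1$ vertices of $I$, giving at most $4c+1$ vertices in total. Taking $t=4c+2$ then yields the theorem, provided the reduction is correct.

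Two facts drive the correctness. First, in any matching-cut every clique of size at least $3$ is monochromatic: if such a clique met both sides, some vertex would have two neighbours across the cut. I would use large cliques both as rigid poles $P_A,P_B$ and, via standard forcing gadgets, to pin their sides and to rule out trivial cuts, so that every matching-cut separates the two poles and is therefore meaningful. Second, a single variable vertex placed in $I$ and joined to exactly one private vertex of each pole retains a genuine binary choice: it may lie on either side while contributing exactly one crossing edge to a private pole-vertex, and making the poles large keeps all these crossing edges a matching. The two directions then run as expected. A valid hypergraph $2$-colouring is transcribed into a side-assignment of the variable vertices whose crossing edges form a matching; conversely, any matching-cut, after using clique-monochromaticity to normalise the pole sides, reads off a $2$-colouring in which the clause gadgets certify that no clause is monochromatic.

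I expect the main obstacle to be the clause gadgets, precisely because of the tension between the two requirements above. Path-boundedness forces me to spend only a \emph{constant} number of cliques and a single independent set, whereas the naive ``one gadget clique per clause'' blows $c$ up to $\Theta(m)$ and destroys the $P_t$-free guarantee; so the clauses must share a bounded number of monochromatic cliques without interfering, and each clause must be penalised \emph{exactly} when its three variables are monochromatic. This is delicate because the matching-cut condition is weak and purely local (at most one crossing edge per vertex): it is easy to write a gadget that fires when all three variables agree but that \emph{also} fires in legitimate configurations, and equally easy to write one that never fires. Producing a gadget that respects the clique/independent-set budget and fires on exactly the forbidden monochromatic patterns, while the forcing gadgets keep the poles rigid and forbid trivial cuts, is where the real work lies.
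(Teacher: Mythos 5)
The proposal is not a proof: you explicitly stop at the point where the actual mathematical content begins. You correctly handle NP membership, you correctly observe that cliques of size at least $3$ must be monochromatic under a matching-cut, and your structural lemma (a graph covered by $c$ cliques plus one independent set is $P_{4c+2}$-free) is sound. But the reduction itself --- the clause gadget that fires exactly on monochromatic clauses, the forcing gadgets that pin the poles, and the verification that everything fits inside your ``constant number of cliques plus one independent set'' budget --- is deferred with the admission that this is ``where the real work lies.'' That is precisely the theorem's content, so as it stands there is a genuine gap: no construction is exhibited, and neither direction of the equivalence is actually proved.

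It is also worth pointing out that the structural template you impose is more restrictive than necessary, and the paper's construction does not satisfy it. The paper uses $\Theta(n)$ variable-gadget cliques (two small cliques per variable) and a separate small gadget per clause; boundedness of induced paths is obtained not by limiting the \emph{number} of cliques but by the \emph{interconnection pattern}: all clause gadgets share one large clique of ``special'' vertices (so an induced path meets at most two of them), and all variable gadgets are pairwise connected only through a distinguished hub variable's gadget or through that special clique. Since an induced path picks up at most four vertices from any single variable gadget and at most two special vertices, it can transit between distinct gadgets only a bounded number of times, which bounds its length. This hub-based argument is what lets the paper afford one gadget per clause --- exactly the move you ruled out as blowing up $c$ to $\Theta(m)$. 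Separately, your choice of monotone NAE-$3$-SAT versus the paper's Restricted Positive 1-in-3-SAT is a reasonable alternative source problem (the paper's gadget in fact first establishes the not-all-equal property and then sharpens it to exactly-one via a global consistency claim across clauses), but without a concrete gadget neither variant gets you to the finish line.
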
 

Theorem \ref{thm:p5} generalizes a result of Bonsma \cite{bonsma2006sparse}, stating that \textsc{matching-cut} is polynomial-time solvable for cographs.  The proof of Theorem \ref{thm:p5} is short and simple and inspired by the proof of \cite[Theorem 5.4]{bouquet2020complexity}.

The proof of Theorem \ref{thm:p12} is also rather short and simple, and involves new arguments.

\section{The proof of Theorem \ref{thm:p5}}\label{sec:1}

We require the following powerful theorem due to Bacs\'o and Tuza \cite{bacso1990dominating}.

\begin{theorem}\label{p5}
A connected $P_5$-free graph $G$ contains a dominating set $X$ that is either a clique or a $P_3$. Moreover, $X$ can be found in polynomial time.  
\end{theorem}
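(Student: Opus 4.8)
The plan is to prove both the existence and the findability of the dominating clique or $P_3$ by analysing a single, easily computable object: an inclusion-minimal connected dominating set. Since $G$ is connected, $V(G)$ is itself a connected dominating set, and by greedily deleting vertices while preserving connectivity and domination I can compute, in polynomial time, a connected dominating set $D$ that is inclusion-minimal (no proper subset is a connected dominating set). I will then show that \emph{every} such $D$ is already either a clique or an induced $P_3$; this simultaneously settles existence and yields the polynomial-time algorithm, since testing whether $G[D]$ is a clique or a $P_3$ is immediate once $D$ is in hand.

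The first and cleanest step is to show that $G[D]$ is $P_4$-free. Suppose $G[D]$ contained an induced path $w-x-y-z$. By inclusion-minimality, $D \setminus \{w\}$ fails to be a connected dominating set, so either (i) it no longer dominates, meaning some vertex $u \notin D$ has $w$ as its only neighbour in $D$, whence $u \not\sim x,y,z$ and $u-w-x-y-z$ is an induced $P_5$; or (ii) it is disconnected, meaning $w$ is a cut-vertex of $G[D]$, so the component of $G[D]-w$ avoiding $x,y,z$ contains a neighbour $s$ of $w$ with $s \not\sim x,y,z$, and again $s-w-x-y-z$ is an induced $P_5$. Either way we contradict $P_5$-freeness, so $G[D]$ is $P_4$-free, i.e.\ a cograph, and in particular has diameter at most $2$.

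It remains to rule out the case that $G[D]$ is a non-clique cograph on at least four vertices; since a connected non-clique graph on exactly three vertices is a $P_3$, this finishes the argument. Assume $|D| \ge 4$ and pick a non-adjacent pair $c,d \in D$. As a connected cograph on at least two vertices, $G[D]$ is a join $G_1 + G_2$ of two nonempty cographs, and $c,d$ must lie in the same part, say $G_1$, with $|G_1| \ge 2$; deleting $c$ then leaves the join $(G_1-c)+G_2$, which is still connected, so $c$ is not a cut-vertex, and likewise for $d$. By minimality each of $c,d$ therefore has a private neighbour, $c^\ast$ and $d^\ast$, adjacent to it but to no other vertex of $D$. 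Taking any $m \in G_2$ (a common neighbour of $c$ and $d$ inside $D$), the path $c^\ast - c - m - d - d^\ast$ is an induced $P_5$ unless $c^\ast \sim d^\ast$.

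The case $c^\ast \sim d^\ast$ is the main obstacle, and it is where $P_5$-freeness must be invoked more globally: the witnesses of minimality ``double back'', so one cannot read off an induced $P_5$ from $c,d,m$ alone. I expect to resolve it by exploiting the further vertex of $D$ guaranteed by $|D|\ge 4$ together with the join structure, rerouting through an additional part-vertex or private neighbour to extract an induced $P_5$ (as a sanity check, this mechanism must and does fail when $|D|=3$, where the configuration closes up into an induced $C_5$, precisely the graph whose minimal connected dominating set is an admissible $P_3$). Completing this case analysis shows $|D|\le 3$ whenever $G[D]$ is not a clique, so the computed $D$ is a clique or a $P_3$; outputting $D$ then establishes both the structural statement and the polynomial-time bound.
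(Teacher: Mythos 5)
The paper never proves this statement: it is imported from Bacs\'o and Tuza with a citation, so your attempt has to stand entirely on its own. Its opening steps do stand: an inclusion-minimal connected dominating set $D$ can be computed in polynomial time, and your argument that $G[D]$ is $P_4$-free is sound (the minimality witness at an endpoint $w$ of an induced $P_4$ --- a private neighbour of $w$ outside $D$, or a neighbour of $w$ in another component of $G[D]-w$ --- extends the $P_4$ to an induced $P_5$). The existence of private neighbours $c^*, d^*$ for a non-adjacent pair $c,d$ and the forced adjacency $c^* \sim d^*$ are also correct.

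The gap is that the proof stops exactly where the theorem's difficulty lies, and you say so yourself: the case $c^* \sim d^*$ is only \emph{expected} to be resolved by rerouting through a fourth vertex. That rerouting does work when some vertex $w \in D$ is non-adjacent to at least one of $c,d$ (if $w \sim c$, $w \not\sim d$, then $w - c - c^* - d^* - d$ is an induced $P_5$; if $w$ is non-adjacent to both, route through $w$'s own private neighbour). But in the residual case \emph{every} other vertex of $D$ is adjacent to both $c$ and $d$, so $\{c,d\}$ plus any two further vertices induces a diamond or a $C_4$, and the configuration closes into the cycle $c - c^* - d^* - d - m - c$; no amount of adjacency-forcing alone opens it. What is missing is the complementary trick of forcing \emph{non}-adjacencies between private neighbours via induced $P_3$'s of $D$ that avoid a vertex: for a diamond $\{a,b,c,d\}$ with non-edge $ac$ and private neighbours $a^*, c^*, d^*$, the path $a^* - a - b - c - c^*$ forces $a^* \sim c^*$, the hypothetical paths $d^* - a^* - a - b - c$ and $d^* - c^* - c - b - a$ force $d^* \not\sim a^*$ and $d^* \not\sim c^*$, and then $d^* - d - a - a^* - c^*$ is an induced $P_5$, a contradiction. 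Analogous two-sided forcing is needed for $C_4$, the paw and the star, together with a case analysis on the join $G[D] = G_1 + G_2$ (both parts of size at least two, versus one part a single vertex, which may be a cut vertex of $G[D]$ and hence have no private neighbour at all --- so even the supply of private neighbours you appeal to is not uniform). Carrying this out does seem to vindicate your claim that every inclusion-minimal connected dominating set of a connected $P_5$-free graph induces a clique or a $P_3$, which would be a nice self-contained alternative to the cited proof; but as written, the proposal establishes only the cograph reduction and the first forced adjacency, and the heart of the theorem is absent.
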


We call a graph $G$ \emph{full} if it contains a dominating set that is a clique of size $\geq 3$. We also call a coloring of the vertices of $G$ with colors red and blue   \emph{good} if every red vertex is adjacent to at most one blue vertex and every blue vertex is adjacent to at most one red vertex. We say that a good coloring is \emph{strong} if it uses both colors. Note that a good coloring defines a matching-cut if and only if it is strong.

We can easily deal with the case when the graph is not full. We abbreviate red and blue by $r$ and $b$ respectively. 

\begin{lemma}\label{lem:p5good}
Enumerating all good colorings of a non-full connected $P_5$-free graph can be done in polynomial-time. 
\end{lemma}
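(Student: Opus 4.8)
The plan is to use the Bacsó–Tuza theorem together with the assumption that the graph is not full. Since $G$ is a non-full connected $P_5$-free graph, Theorem \ref{p5} gives a dominating set $X$ that is either a clique or a $P_3$. Because $G$ is not full, if $X$ is a clique then it has size at most $2$; if $X$ is a $P_3$ then it has exactly $3$ vertices. In every case $|X| \le 3$, so $X$ is a *small* dominating set. The key structural idea is that in any good coloring, once we fix the colors of the vertices in $X$, the colors of all remaining vertices are essentially forced, so the total number of good colorings is polynomially bounded.

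Let me think about why fixing $X$ nearly determines the coloring. Every vertex $v \notin X$ is adjacent to some vertex of $X$ because $X$ dominates $G$. Suppose $v$ is adjacent to a red vertex $x \in X$. If $v$ were blue, then $x$ (red) is adjacent to the blue vertex $v$; in a good coloring $x$ may be adjacent to at most one blue vertex, which is a genuine constraint but does not immediately force $v$'s color. **So the forcing is not quite automatic**, and this is where I must be careful. The cleaner statement to aim for is: the number of good colorings is polynomial because each good coloring is determined by a bounded amount of "boundary" information, namely the coloring of $X$ together with, for each vertex of $X$, the identity of its (at most one) neighbor of the opposite color.

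The plan in steps is as follows. First I would obtain $X$ with $|X| \le 3$ via Theorem \ref{p5} and the non-fullness hypothesis. Second, I would enumerate all colorings of $X$ itself; there are at most $2^{|X|} \le 8$ of these. Third, for each coloring of $X$, I would show how to extend it to all of $G$ in only polynomially many ways: for each vertex $x \in X$ I guess which vertex (if any) of the opposite color is its unique opposite-colored neighbor — there are at most $n+1$ choices per vertex of $X$, hence at most $(n+1)^{|X|}$ guesses overall, a polynomial. Given the coloring of $X$ and these designated opposite-colored neighbors, every other vertex $v \notin X$ that is adjacent to some $x \in X$ must take the color of $x$ (since it is not the designated opposite neighbor), and one then checks consistency and propagates.

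**The main obstacle** I anticipate is handling vertices adjacent to several vertices of $X$ of conflicting colors, and verifying that the "forced" color is consistent and that the resulting coloring is genuinely good. When $v$ is adjacent to both a red and a blue vertex of $X$, its color is constrained from both sides, and one must argue that either the color is uniquely forced or the configuration is infeasible and can be discarded; the $P_5$-freeness and the domination by a small set $X$ should rule out pathological cases where too many vertices remain free. I would close by bounding the total count — at most $2^{|X|} \cdot (n+1)^{|X|} = O(n^3)$ candidate colorings, each checkable in polynomial time — which establishes that all good colorings can be enumerated in polynomial time.
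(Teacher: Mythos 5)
Your proposal is correct and follows essentially the same route as the paper: obtain a dominating set $X$ of size at most $3$ from Theorem \ref{p5} and non-fullness, then observe that a good coloring is determined by the coloring of $X$ together with, for each $x \in X$, the choice of its at most one opposite-colored neighbor, giving $O(n^{|X|})$ candidates to check. Your worry about vertices adjacent to conflicting colors is harmless --- such candidates are simply discarded during the final goodness check, and no further use of $P_5$-freeness is needed.
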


\begin{proof}
Let $G$ be non-full $P_5$-free graph. Since $G$ is non-full, by Theorem \ref{p5} we can find in polynomial-time a dominating set $X$ of $G$ that is $K_1$, $K_2$ or $P_3$. %To prove the lemma, it suffices to enumerate in polynomial-time all good colorings of $G$. 

For each good coloring $c$ of $X$ and each vertex $x \in X$, how many ways are there of extending $c$ to a good coloring of $X \cup N_{G - X}(x)$? Since at most one vertex adjacent to $x$ can receive the color in $\{r, b\} \setminus c(x)$ at most $|N_{G - X}(x)| + 1$ such ways are possible. Therefore, the number of ways there are of extending $c$ to a good coloring of $G = G[X \cup \bigcup_{x \in X} N_{G - X}(x)]$ is polynomial in the number of vertices. This implies the lemma. 
\end{proof}

We are now ready to prove the theorem. 

\begin{proof}[Proof of Theorem \ref{thm:p5}]
Let $G$ be a $P_5$-free graph. We can assume that $G$ is connected  (since otherwise we apply our algorithm component-wise). By Theorem \ref{p5}, we can find in polynomial-time a dominating set $X$ in $G$ that is either $K_1$, $K_2$, a clique on at least three vertices or $P_3$. If $X$ is $K_1$, $K_2$ or $P_3$, then we apply  Lemma \ref{lem:p5good}.  

Otherwise, since $X$ is a clique with $\geq 3$ vertices, we can assume, without loss of generality, that $G$ is precolored by coloring $X$ red. Let $C$ be a component of $G - X$. Since $C$ is dominated by $X$, any good coloring of $C$ must be monochromatic. Altogether, this implies that if $G$ has a strong coloring, then $G$ has a strong coloring such that at least one component of $G - X$ is blue. As this can clearly be checked in polynomial-time, the proof is complete.  
\end{proof}

\section{The proof of Theorem \ref{thm:p12}}

An instance $(X, \mathcal{C})$ of \emph{Restricted Positive 1-in-3-SAT} consists of a set of Boolean variables $X = \{x_1, \dots, x_n\}$ and a collection of clauses $\mathcal{C} = \{C_1, \dots, C_m\}$, where each clause is a disjunction of exactly three variables, and the question is to determine whether there exists a satisfying truth assignment so that exactly one variable in each clause is set to true. This problem is NP-complete \cite{schmidt2010computational}.

\begin{proof}[Proof of Theorem \ref{thm:p12}]
The problem \textsc{matching-cut} is clearly in NP.  Let $F = (X, \mathcal{C})$ be any instance of Restricted Positive 1-in-3 SAT. We construct, in polynomial time, a graph $G$ that is $P_k$-free for some positive integer $k$ such that $F$ is satisfiable iff $G$ has a matching-cut. 
 
From now on, we fix a variable $s \in X$. For each variable $x \in X \setminus \{s\}$, we build a variable gadget depicted in Figure \ref{fig:var}, where each  of the two circles $C_x^v$ and $C_x^u$ depicts a clique on five vertices. Similarly, for $s$ we build a variable gadget as in Figure \ref{fig:var}, except that each of the circles $C_s^v$ and $C_s^u$ depicts a clique on $|X| + 3$ vertices, where vertices of the left circle are labelled $v_{x_1}, v_{x_2}, v_{x_3}, v_x, v_x^1, \dots, v_x^{|X|-1}$ and vertices of the right circle  $u_{x_1}, u_{x_2}, u_{x_3}, u_x, u_x^1, \dots, u_x^{|X|-1}$.  

For $i \in \{1, 2, 3\}$, we think of $v_{x_i}$ as \emph{corresponding} to the $i$th occurrence of $x$ and, as will become evident by the end of the proof, of $u_{x_i}$ as ``complementary"  to $v_{x_i}$; we also call $v_{x_i}$ and $u_{x_i}$ \emph{variable vertices}.    We connect the set of variable gadgets as follows. Set $V_s = \{ v_s^1, \dots, v_s^{|X|-1}\}$, $U_s = \{u_s^1, \dots, u_s^{|X|-1} \}$, $W = \{v_x^1:x\in X \setminus \{s\} \}$ and $Z =\{u_x^1:x\in X \setminus \{s\} \}$, let $f : V_s \rightarrow W$ and $g : U_s \rightarrow Z$ be bijective so that, furthermore, $f(v_s^j)$ and $g(u_s^j)$ are members of the the same variable gadget for $j \in \{1, \dots, |X| - 1\}$, and add the edges $v_s^jf(v_s^j)$, $v_s^jg(u_s^j)$,  $u_s^jg(u_s^j)$ and $u_s^jf(v_s^j)$ for $j \in \{1, \dots, |X| - 1\}$. See Figure \ref{fig:fg} for an example illustrating the edges between the variable gadgets corresponding to $s$, $x$ and $y$.  

For each clause $C \in \mathcal{C}$, we build a clause gadget depicted in Figure \ref{fig:clause}, where vertices $v_C, u_C^1, u_C^2, u_{x_1, C, 1}, u_{x_1, C, 2}, u_{z_2, C, 1}, u_{z_2, C, 2}, u_{y_3, C, 1}, u_{y_3, C, 2}$ are new vertices, and vertices $v_{x_1}, v_{z_2}, v_{y_3}, u_{x_1}, u_{z_2}, u_{y_3}$ are variable vertices to be found in, respectively, $C_x^v, C_z^v, C_y^v, C_x^u, C_z^u$ and $C_y^u$. We call the vertices $v_C$ and $u^i_C$ for $ i \in \{1,2\}$ \emph{special}.   We complete the construction of $G$ by adding an edge between every pair of special vertices.

 \begin{figure}
\begin{center}\includegraphics[width=0.8\textwidth]{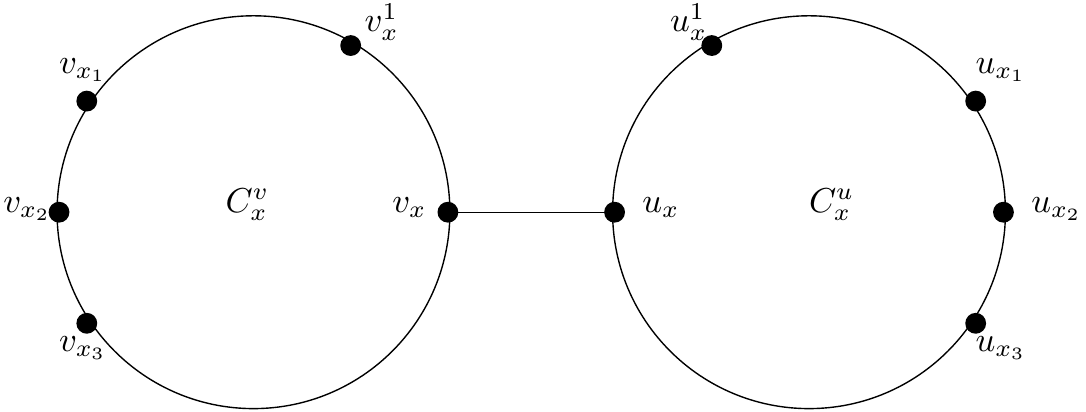}\end{center}
\caption{Variable gadget}\label{fig:var}
\end{figure}

 \begin{figure}
\begin{center}\includegraphics[width=0.8\textwidth]{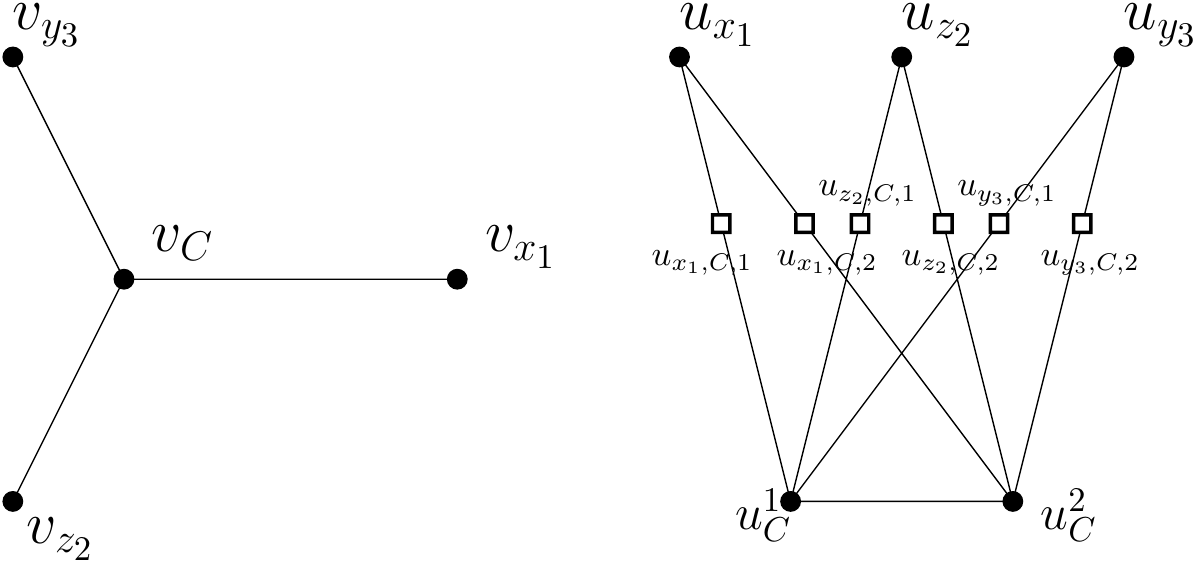}\end{center}
\caption{Clause gadget for $C = (x_1 \lor y_3 \lor z_2)$}\label{fig:clause}
\end{figure}

 \begin{figure}
\begin{center}\includegraphics[width=1\textwidth]{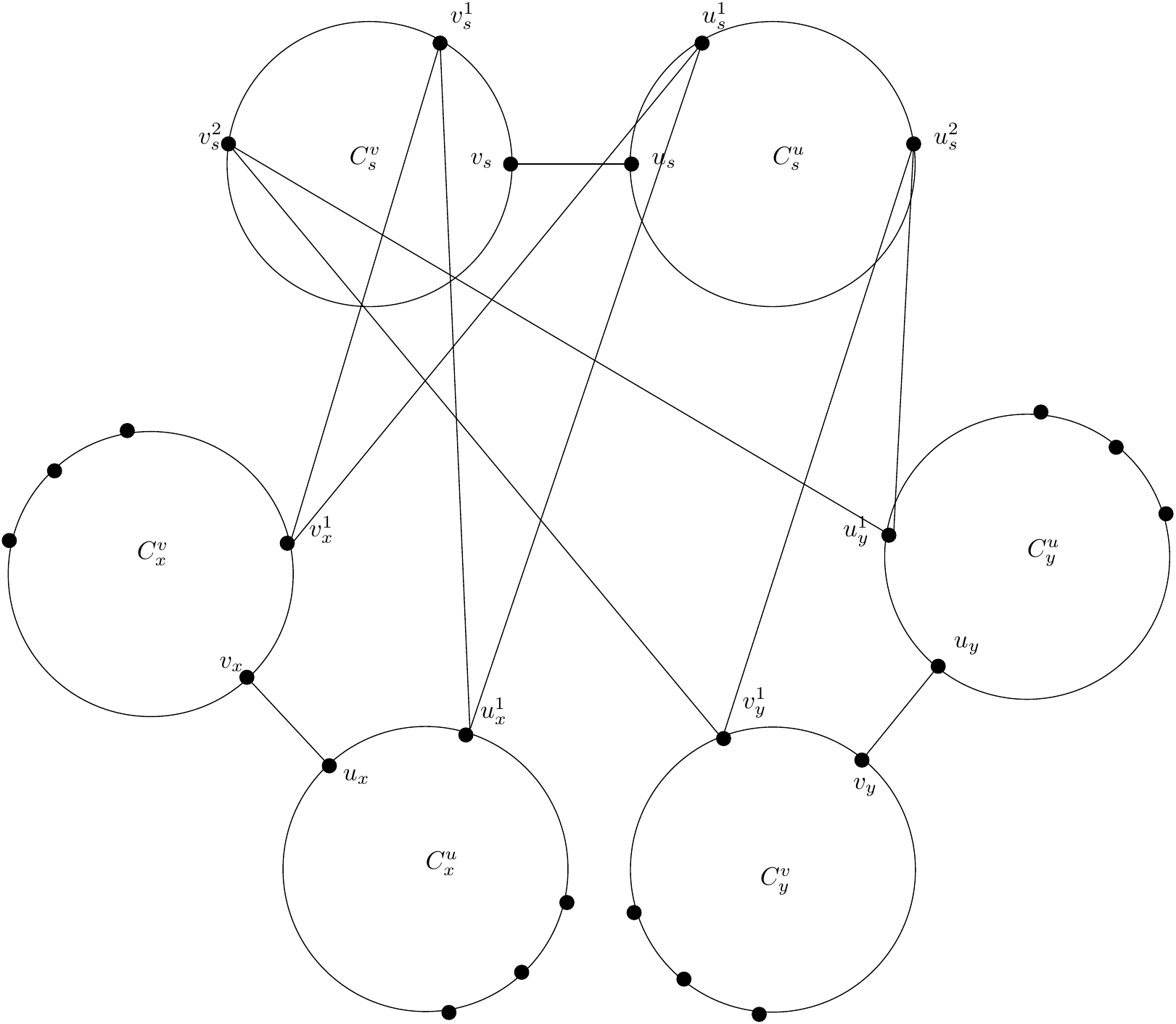}\end{center}
\caption{Edges between three variable gadgets corresponding to variables $s$, $x$ and $y$.}\label{fig:fg}
\end{figure}

Recall from Section \ref{sec:1} that $G$ has a strong coloring if and only if $G$ has a matching-cut. 

Suppose $G$ has a strong coloring $\varphi$. 

\begin{claim}\label{special}
The set of special vertices and, for each $x \in X$, the circles $C_x^v$ and $C_x^u$ are each monochromatic. 
\end{claim}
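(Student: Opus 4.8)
The plan is to reduce both assertions of the claim to a single elementary fact about good colorings of cliques, and then to verify that each vertex set named in the claim is a clique of size at least three. So the first thing I would prove, as an auxiliary observation, is that in \emph{any} good coloring every clique on at least three vertices is monochromatic. To see this, suppose a clique $K$ with $|K|\ge 3$ received both colors, say a red vertex $a$ and a blue vertex $b$, and pick any third vertex $c\in K$. Since $K$ is a clique, $c$ is adjacent to both $a$ and $b$: if $c$ is blue, then the red vertex $a$ has two blue neighbours $b,c$; if $c$ is red, then the blue vertex $b$ has two red neighbours $a,c$. In either case the defining condition of a good coloring is violated, so $K$ must be monochromatic.

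Because $\varphi$ is a strong coloring, it is in particular a good coloring, so the observation applies to it directly. The cliques $C_x^v$ and $C_x^u$ have five vertices when $x\neq s$ and $|X|+3\ge 3$ vertices when $x=s$; in every case they are cliques on at least three vertices, hence monochromatic under $\varphi$. This settles the second half of the claim with no further work.

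For the special vertices, the key point is purely structural: the last step of the construction adds an edge between every pair of special vertices, so the whole set $\{v_C,u_C^1,u_C^2 : C\in\mathcal{C}\}$ of $3m$ special vertices induces a clique in $G$. Provided the instance $F$ contains at least one clause (otherwise it is trivially satisfiable and the reduction is vacuous), this clique has size $3m\ge 3$, so the observation forces it to be monochromatic under $\varphi$, which finishes the proof.

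I do not expect a genuine obstacle here: once the clique observation is in place, every assertion follows by identifying the relevant vertices as a clique of size at least three. The only steps requiring care are bookkeeping — reading off the clique sizes $5$ and $|X|+3$ from the variable gadgets, and recalling that ``special'' was defined so that \emph{all} such vertices across \emph{all} clauses are pairwise adjacent, rather than only those within a single clause gadget.
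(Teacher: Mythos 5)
Your proposal is correct and takes essentially the same route as the paper, which simply cites the fact that any clique on at least three vertices must be monochromatic in a strong (indeed, any good) coloring; you have merely written out the short proof of that fact and the bookkeeping of clique sizes that the paper leaves implicit.
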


\begin{proof}
Immediate from the fact that any complete graph on at least three vertices must be monochromatic in a strong coloring. 
\end{proof}

Call (the strong coloring) $\varphi$ \emph{$S$-splitting} for some $S \subset X$ if $\varphi(C_x^v) \not= \varphi(C_x^u)$ for each $x \in S$. 

\begin{claim}\label{claim:bi}
Given a clause $C = (x \lor y \lor z)$, if $\varphi$ is $\{x, y, z \}$-splitting, then $\{v_x, v_y, v_z\}$ is bichromatic. 
\end{claim}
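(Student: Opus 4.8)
The plan is to argue by contradiction, using Claim~\ref{special} to collapse the whole question to the colours of a handful of vertices. Write $\sigma$ for the common colour of the special vertices and $\bar\sigma$ for the other colour. By Claim~\ref{special} every circle is monochromatic, so I may speak of \emph{the} colour $\varphi(v_w)=\varphi(C_w^v)$ of a variable vertex $v_w$; and since $\varphi$ is $\{x,y,z\}$-splitting, for each $w\in\{x,y,z\}$ the vertex $u_w$ (lying in $C_w^u$) receives the colour opposite to $v_w$. Suppose, for contradiction, that $\{v_x,v_y,v_z\}$ is monochromatic, all three having some colour $\gamma$, and split into the two cases $\gamma=\bar\sigma$ and $\gamma=\sigma$.

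First I would dispose of the case $\gamma=\bar\sigma$. The special vertex $v_C$ has colour $\sigma$ and, by the construction of the clause gadget, is adjacent to each of $v_x,v_y,v_z$. Then $v_C$ would have three neighbours of the opposite colour $\bar\sigma$, contradicting the defining property of a good colouring, namely that every vertex has at most one neighbour of the other colour. Hence not all of $v_x,v_y,v_z$ can be coloured $\bar\sigma$.

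It remains to rule out $\gamma=\sigma$, and this is where the bulk of the work lies. If $v_x,v_y,v_z$ are all coloured $\sigma$, then by splitting $u_x,u_y,u_z$ are all coloured $\bar\sigma$. I would then propagate this colour along the $u$-side of the clause gadget: each auxiliary vertex $u_{w,C,i}$ is tied to $u_w$ in Figure~\ref{fig:clause} (for instance by lying in a common clique with it), which forces its colour to agree with $u_w$, i.e.\ to be $\bar\sigma$; meanwhile the special vertices $u_C^1,u_C^2$ are each adjacent to the three auxiliaries $u_{x,C,i},u_{y,C,i},u_{z,C,i}$. Consequently $u_C^1$, of colour $\sigma$, would acquire at least two—indeed three—neighbours of colour $\bar\sigma$, again violating goodness. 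This contradiction shows that $\gamma=\sigma$ is impossible as well, so $\{v_x,v_y,v_z\}$ must be bichromatic.

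The only delicate point is the case $\gamma=\sigma$: one must read off from Figure~\ref{fig:clause} exactly how the auxiliaries $u_{w,C,i}$ attach to $u_w$ and to $u_C^1,u_C^2$, and check that their colours are genuinely forced, so that some special vertex is left with more than one oppositely coloured neighbour. The case $\gamma=\bar\sigma$, by contrast, is immediate from the single star centred at $v_C$, and Claim~\ref{special} does all the bookkeeping needed to reduce both cases to counting neighbours of a special vertex.
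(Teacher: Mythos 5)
Your argument is essentially the paper's own proof: assume $\{v_x,v_y,v_z\}$ is monochromatic, use Claim~\ref{special} to fix the colour of the special clique, rule out the colour opposite to the special vertices by counting neighbours of $v_C$, and rule out the agreeing colour by pushing the splitting hypothesis through $u_x,u_y,u_z$ to the auxiliaries and then to $u_C^1,u_C^2$. The one point where your proposal diverges from the actual gadget is the step you yourself flag as delicate: the auxiliaries $u_{w,C,1},u_{w,C,2}$ do \emph{not} lie in a common clique with $u_w$, so their colours are not forced to agree with $u_w$. (Indeed, in the paper's converse direction a valid strong colouring assigns $u_y$ and $u_{y,C,2}$ red but $u_{y,C,1}$ blue, which would be impossible if $\{u_w,u_{w,C,1},u_{w,C,2}\}$ were a triangle.) In the gadget of Figure~\ref{fig:clause}, $u_w$ is merely adjacent to both of its auxiliaries, so goodness only forces \emph{at least one} of the pair to share the colour $\bar\sigma$ of $u_w$. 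That weaker conclusion still suffices: it yields at least three $\bar\sigma$-coloured auxiliaries among the six, and since each auxiliary attaches to one of the two special vertices $u_C^1,u_C^2$, one of these acquires at least two $\bar\sigma$ neighbours and must itself be $\bar\sigma$, contradicting Claim~\ref{special}. So the skeleton and the contradiction are exactly right; only the justification of the propagation step needs to be replaced by this ``at least one per pair'' count, which is what the paper does.
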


\begin{proof}
Suppose for a contradiction that $\{v_x, v_y, v_z\}$ is monochromatic and assume, without loss of generality, that its color is red. Then $v_C$ is also red, since otherwise $\varphi$ is not strong.

On the other hand, since $\varphi$ is $\{x, y, z \}$-splitting, the color of  $\{u_x, u_y, u_z\}$ is blue, which in turn implies that at least three of the vertices in  $\bigcup_{t \in \{x, y, z\}, i \in \{1, 2\} }\{u_{t, C, i}\}$ are blue and so at least one of $u^1_C$, $u_C^2$ is also blue. This contradicts Claim \ref{special}.  
\end{proof}

Recall that we abbreviate red and blue by $r$ and $b$, respectively.  

\begin{claim}\label{claim:equal}
Given clauses $C = (x \lor y \lor z)$ and $C' = (p \lor q \lor t)$, if $\varphi$ is $\{x, y, z, p, q, t \}$-splitting, then $$|\varphi(\{v_x, v_y, v_z \})\cap \{r\}| = |\varphi(\{v_p, v_q, v_t \})\cap \{r\}| \in \{1,2 \}.$$ 
\end{claim}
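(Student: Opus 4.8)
The plan is to exploit the fact — guaranteed by Claim~\ref{special} together with the last line of the construction — that \emph{all} special vertices, across \emph{all} clause gadgets, form a single clique and hence receive one common color; call it $a \in \{r,b\}$. In particular $v_C$, $v_{C'}$ and every $u_C^i$, $u_{C'}^i$ are colored $a$. Reading $|\varphi(\{v_x,v_y,v_z\})\cap\{r\}|$ as the number of red vertices of the triple, I first note that $\{x,y,z,p,q,t\}$-splitting trivially implies that $\varphi$ is both $\{x,y,z\}$-splitting and $\{p,q,t\}$-splitting (splitting is monotone in the set, even if the two clauses share variables). Hence Claim~\ref{claim:bi} applies to each clause separately, so $\{v_x,v_y,v_z\}$ and $\{v_p,v_q,v_t\}$ are each bichromatic; in particular each triple already has its red count in $\{1,2\}$.

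The crux is to pin each red count down to a single value determined only by the global color $a$. I would record that $v_C$ is adjacent to all three of $v_x,v_y,v_z$ (this is precisely the adjacency underlying the proof of Claim~\ref{claim:bi}). Since $v_C$ carries color $a$, goodness of $\varphi$ at $v_C$ forces at most one of $v_x,v_y,v_z$ to carry the opposite color $\bar a$. Combining this with bichromaticity from Claim~\ref{claim:bi}, which forces at least one of the triple to be $\bar a$, I conclude that \emph{exactly} one of $v_x,v_y,v_z$ is colored $\bar a$ and the remaining two are colored $a$. The identical argument applied to $v_{C'}$ shows exactly one of $v_p,v_q,v_t$ is $\bar a$ and two are $a$.

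It then remains only to translate this into the stated equality. If $a=r$ then each triple has exactly two red vertices, so both sides equal $2$; if $a=b$ then each triple has two blue and hence exactly one red vertex, so both sides equal $1$. In either case the two counts coincide and lie in $\{1,2\}$, which is exactly the assertion.

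I expect the only genuine obstacle to be the opening claim of the second paragraph: one must verify from the precise clause-gadget adjacencies of Figure~\ref{fig:clause} that $v_C$ is joined to \emph{all three} variable vertices $v_x,v_y,v_z$ (and not merely to two of them), since the ``at most one $\bar a$-neighbor'' deduction relies on $v_C$ seeing the whole triple. Everything else reduces to a short application of goodness together with Claims~\ref{special} and~\ref{claim:bi}.
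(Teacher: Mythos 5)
Your proposal is correct and takes essentially the same route as the paper: both arguments hinge on $v_C$ being adjacent to all of $v_x,v_y,v_z$, so that goodness forces $v_C$ to take the majority color of its triple, combined with Claim~\ref{special} making all special vertices monochromatic. The paper phrases this as a contradiction (unequal red counts would force $v_C$ and $v_{C'}$ to receive different colors), whereas you argue directly that each triple has exactly one vertex of the non-special color, but the content is identical.
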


\begin{proof}
By Claim \ref{claim:bi}, $|\varphi(\{v_x, v_y, v_z \})\cap \{r\}|,  |\varphi(\{v_p, v_q, v_t \})\cap \{r\}| \in \{1,2 \}$. If for a contradiction $1 = |\varphi(\{v_x, v_y, v_z \})\cap \{r\}| < |\varphi(\{v_p, v_q, v_t \})\cap \{r\}| = 2$, then by construction $v_C$ is blue and $v_{C'}$ is red, which contradicts Claim \ref{special}. 
\end{proof}

\begin{claim}\label{claim:splitx}
If $\varphi$ is  $\{x\}$-splitting for some $x \in X$, then $\varphi$ is $X$-splitting. 
\end{claim}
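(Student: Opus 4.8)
The plan is to exploit the fact that the construction makes $s$ a ``hub'': every other variable gadget is joined to $s$'s gadget by the $f,g$-edges, while no two gadgets other than $s$'s are joined to one another in this way. First I would record the only consequence of Claim~\ref{special} needed here. Since each circle is a clique on at least three vertices, it is monochromatic under $\varphi$, so being $\{x\}$-splitting says exactly that the (single-coloured) circles $C_x^v$ and $C_x^u$ receive different colors, and likewise for $s$.

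Next I would isolate, for each $x \in X \setminus \{s\}$, the relevant linking structure. Let $j$ be the index with $f(v_s^j)=v_x^1$ and $g(u_s^j)=u_x^1$; such a $j$ exists and is unique because $f,g$ are bijections that send $j$ to the same gadget. The four edges added at step $j$ are precisely $v_s^j v_x^1$, $v_s^j u_x^1$, $u_s^j v_x^1$ and $u_s^j u_x^1$, so the vertices $v_s^j, u_s^j, v_x^1, u_x^1$ induce a $K_{2,2}$ with parts $\{v_s^j, u_s^j\}$ and $\{v_x^1, u_x^1\}$. As $v_s^j, u_s^j$ lie in $C_s^v, C_s^u$ and $v_x^1, u_x^1$ lie in $C_x^v, C_x^u$, their colors equal $\varphi(C_s^v), \varphi(C_s^u), \varphi(C_x^v), \varphi(C_x^u)$ respectively.

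The crux is then a short analysis of a good coloring of this $K_{2,2}$: I would show that the part $\{v_s^j, u_s^j\}$ is bichromatic if and only if $\{v_x^1, u_x^1\}$ is. For one direction, if $\{v_x^1, u_x^1\}$ were monochromatic, say both red, then whichever of $v_s^j, u_s^j$ is blue would be adjacent to two red vertices, contradicting that a good coloring allows each vertex at most one neighbour of the opposite color; hence $\{v_s^j, u_s^j\}$ bichromatic forces $\{v_x^1,u_x^1\}$ bichromatic. Conversely, if $\{v_s^j, u_s^j\}$ is bichromatic, then the good-coloring constraints applied to the differently coloured vertices $v_s^j$ and $u_s^j$ force at most one red and at most one blue among $v_x^1, u_x^1$, i.e.\ exactly one of each. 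Rephrased in terms of circles, this says $\varphi$ is $\{s\}$-splitting if and only if it is $\{x\}$-splitting, for every $x \in X\setminus\{s\}$.

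Finally I would assemble the claim. If $\varphi$ is $\{x\}$-splitting for some $x$, then either $x=s$, or $x\neq s$ and the equivalence above makes $\varphi$ also $\{s\}$-splitting; in either case $\varphi$ is $\{s\}$-splitting, and applying the equivalence in the other direction to every $y \in X \setminus \{s\}$ shows $\varphi$ is $\{y\}$-splitting for all $y$, hence $X$-splitting. I expect the only delicate point to be the case check for the $K_{2,2}$, and in particular making sure that \emph{both} implications (monochromatic-to-monochromatic and bichromatic-to-bichromatic) are genuinely forced by the at-most-one-opposite-neighbour condition, rather than just one of them.
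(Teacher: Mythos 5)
Your proof is correct and follows essentially the same route as the paper: the paper likewise uses the four added edges between $\{v_s^j, u_s^j\}$ and $\{f(v_s^j), g(u_s^j)\}$ to show that if one pair is bichromatic then so is the other, propagating the splitting property from $s$ to every other gadget and, by the symmetric argument, from any $x \neq s$ back to $s$. One small slip: your ``conversely'' paragraph re-proves the same implication under the same hypothesis ($\{v_s^j,u_s^j\}$ bichromatic); the actual converse is obtained by swapping the roles of the two parts of the $K_{2,2}$, which is immediate since the configuration is symmetric.
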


\begin{proof}

We distinguish two cases. 

Suppose first that $\varphi$ is $\{s\}$-splitting. For each $j \in \{1, \dots, |X| - 1\}$, since $v_s^jf(v_s^j)$, $v_s^jg(u_s^j)$,  $u_s^jg(u_s^j)$ and $u_s^jf(v_s^j)$ are edges and since, by assumption, $v_s^j$ and $u_s^j$ differ in color, $f(v_s^j)$ and $g(u_s^j)$ must also differ in color (else $\varphi$ is not good). Thus, $\varphi$ is $X$-splitting; see Figure \ref{fig:claim} for an illustration. 

In all other cases, $\varphi$ is $\{x\}$-splitting for some $x \in X \setminus \{s\}$. Then an analogous argument implies  $\varphi$ is $s$-splitting which in turn implies the claim. 
\end{proof}

\begin{claim}\label{claim:last}
$\varphi$ is  $X$-splitting. 
\end{claim}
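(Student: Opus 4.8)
The plan is to deduce Claim~\ref{claim:last} from Claim~\ref{claim:splitx}: since that claim upgrades $\{x\}$-splitting for a single variable to full $X$-splitting, it suffices to show that $\varphi$ is $\{x\}$-splitting for at least one $x \in X$. I would argue by contradiction, assuming $\varphi(C_x^v) = \varphi(C_x^u)$ for every $x \in X$. By Claim~\ref{special} each circle is monochromatic, so this assumption makes every variable gadget monochromatic, and the goal is to show that the \emph{entire} graph is then forced to be monochromatic, contradicting the assumption that $\varphi$ is strong (and hence uses both colors).

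First I would synchronize the colors of all variable gadgets through $s$. Fixing, without loss of generality, the color of the $s$-gadget to be $r$, both $v_s^j$ and $u_s^j$ are red for every $j$. Each of these is joined by an edge to $f(v_s^j)$ and $g(u_s^j)$, which belong to the circles $C_x^v$ and $C_x^u$ of a common non-$s$ gadget (via the four edges $v_s^j f(v_s^j)$, $v_s^j g(u_s^j)$, $u_s^j g(u_s^j)$, $u_s^j f(v_s^j)$ of Figure~\ref{fig:fg}). A blue vertex adjacent to the two red vertices $v_s^j$ and $u_s^j$ would violate goodness, so $f(v_s^j)$ and $g(u_s^j)$ must be red; as the gadgets are monochromatic and $f, g$ are bijections meeting every non-$s$ gadget, I conclude that every $C_x^v$ and $C_x^u$ --- and thus every variable vertex --- is red.

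It remains to propagate red through the clause gadgets. The special vertices form a clique on $3m \geq 3$ vertices, hence are monochromatic by Claim~\ref{special}; since $v_C$ is adjacent to the (now red) variable vertices $v_x, v_y, v_z$, it cannot be blue, so all special vertices are red. Finally, each remaining clause vertex $u_{t, C, i}$ is adjacent to the red vertex $u_t$ and to a red special vertex, giving it two red neighbors, so it too must be red. Every vertex of $G$ is therefore red, the desired contradiction, and so $\varphi$ is $\{x\}$-splitting for some $x$.

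The step I expect to be the main obstacle is this last one: confirming from the precise incidences of Figure~\ref{fig:clause} that no non-special clause vertex can survive as a blue vertex once all variable and special vertices are red --- equivalently, that each such vertex really does have at least two red neighbors and cannot form an isolated blue component. This is exactly the place where the gadget must be engineered to avoid spurious matching-cuts, so it warrants a careful reading of the figure rather than the coarser counting used in the proof of Claim~\ref{claim:bi}; the earlier steps are then immediate.
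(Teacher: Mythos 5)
Your proposal is correct and follows essentially the same route as the paper: assume no variable is split, use the $s$-gadget connections (as in the proof of Claim~\ref{claim:splitx}) to make all variable gadgets monochromatic of one color, force the special vertices and then the remaining clause-gadget vertices to that color, and contradict the strongness of $\varphi$. The paper's version is just terser, citing the proof of Claim~\ref{claim:splitx} for the propagation step you spell out explicitly.
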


\begin{proof}
Otherwise, by Claim  \ref{claim:splitx} and its proof, the graph induced by the union of the variable gadgets is monochromatic, say has color red. This in turn implies, by construction and Claim \ref{special}, that every special vertex is red. On the other hand, by definition, $G$ has a blue vertex. But this vertex cannot be a non-special vertex of a clause gadget else it would have both neighbors red. Therefore, $G$ is red itself, a contradiction.  
\end{proof}

We are now ready to show that $F$ is satisfiable. Since $\varphi$ is $X$-splitting by Claim \ref{claim:last}, we can assume, by Claim \ref{claim:equal} and interchanging the roles of red and blue if necessary, that $|\varphi(\{v_x, v_y, v_z \})\cap \{r\}| = 1$ for each clause $(x \lor y \lor z) \in \mathcal{C}$. We now set a variable $x \in X$ to true if and only if its corresponding vertices are red. 

Conversely, suppose $F$ is satisfiable.  For each variable $x \in X$, we give the vertices in $C^v_x$ color red if $x$ is set to true and blue otherwise. We extend this partial coloring to an $X$-splitting coloring of the graph induced by the union of the variable gadgets. We complete this partial coloring to a coloring of $G$ by coloring each special vertex with color blue and, for each clause gadget corresponding to a clause, say $C = (x, y, z)$, assuming without loss of generality $u_x$ is blue and $u_y$ and $u_z$ are red, color blue $u_{x, C, 1}, u_{x, C, 2}, u_{y, C, 1}, u_{z, C, 2}$ and red $u_{y, C, 2}, u_{z, C, 1}$. To see that the resulting coloring is strong, it suffices to argue that the coloring restricted to any clause gadget $C = (x, y, z)$ is strong (since, by assumption, the coloring is $X$-splitting). As $v_C$ is blue, $v_x$ is red and $v_y, v_z$ are blue, each of these vertices has at most one neighbor of the other color and so the coloring restricted to the graph induced by $\{v_C, v_x, v_y, v_z\}$ is strong. Similarly, as $u_C^1$ and $u_C^2$ are blue, $u_{x, C, 1}, u_{x, C, 2}, u_{y, C, 1}, u_{z, C, 2}$ are blue, $u_{y, C, 2}, u_{z, C, 1}$ are red, $u_x$ is blue and $u_y$ and $u_z$ are red, each of these vertices has at most one neighbor of the other color and so we are done.  

To complete the proof, it remains to show that $G$ is $P_{k}$-free for some $k > 0$. Suppose $G$ contains an induced path $P$ with $t \geq 1$ vertices.  Since the set of special vertices induces a complete graph, $P$ can contain at most two special vertices and these are consecutive on $P$.  Similarly, by construction, $P$ can contain at most four vertices from each variable gadget.  How many variable gadgets can have vertices in common with $P$? 

Any two variable gadgets are connected either via a special vertex or via the variable gadget of $s$ and therefore, by our earlier observations, the number of such variable gadgets is bounded;  as $t$ can contain at most two special vertices and at most four vertices from a bounded number of variable gadgets, $t$ is also bounded.  This completes the proof.   
\end{proof}

We should remark that the same proof works via a reduction from the more well-known Positive 1-in-3-SAT problem, that is, the 1-in-3-SAT problem in which every variable occurs as positive (but may also appear more than three times). We chose the restricted version of this problem for ease of presentation.

\subsection*{Acknowledgements}
I thank both referees for very carefully reading the paper and for their feedback that significantly improved the presentation. I also thank Pierre Berg\'e for a very helpful discussion and Valentin Bouquet and Van Bang Le for their encouragements and telling me about \cite{chvatal}.
This work was supported by
the French National Research Agency under research grant ANR DIGRAPHS ANR-19-CE48-0013-01.

 \bibliography{bibliography}{}
\bibliographystyle{abbrv}

\end{document}